\newcommand{\RN}[1]{%
  \textup{\uppercase\expandafter{\romannumeral#1}}%
}
\date{}
\begin{document}

\vspace*{1.5cm}

\centerline{}

\centerline {\Large{\bf A note on pairs of rings with same prime ideals}} 
\centerline{}

\centerline{\bf {Rahul Kumar\footnote{The author was supported by a grant from UGC India, Sr.
No. 2061440976.} \& Atul
Gaur\footnote{The author was supported by the MATRICS grant from DST-SERB, No. MTR/2018/000707.} }}

\centerline{Department of Mathematics}

\centerline{University of Delhi, Delhi, India.}

\centerline{E-Mail: rahulkmr977@gmail.com; gaursatul@gmail.com}

\centerline{}

\newtheorem{Theorem}{\quad Theorem}[section]

\newtheorem{Corollary}[Theorem]{\quad Corollary}

\newtheorem{Lemma}[Theorem]{\quad Lemma}

\newtheorem{Proposition}[Theorem]{\quad Proposition}
\theoremstyle{definition}

\newtheorem{Definition}[Theorem]{\quad Definition}

\newtheorem{Example}[Theorem]{\quad Example}

\newtheorem{Remark}[Theorem]{\quad Remark}

\begin{abstract}
We study the ring extensions $R \subseteq T$ having the same set of prime ideals provided $Nil(R)$ is a divided prime ideal. Some conditions are given under which no such $T$ exist properly containing $R$. Using idealization theory, the examples are also discussed to strengthen the results.     
\end{abstract}

\noindent
{\bf Mathematics Subject Classification:}  Primary 13B99; Secondary 13A15, 13A18.\\
{\bf Keywords:} $\phi$-PVR, $\phi$-chained ring, overring.

\section{Introduction}
Throughout this paper, all rings are commutative with nonzero identity and if $R$ is a subring of $T$, then the identity element of $R$ and $T$ is same. By an overring of $R$, we mean a subring of the total quotient ring of $R$ containing $R$. By a local ring, we mean a ring with unique maximal ideal. The symbol $\subseteq$ is used for inclusion, while $\subset$ is used for proper inclusion. We use $T(R)$ to denote the total quotient ring of $R$, $R'$  to denote the integral closure of a ring $R$ in $T(R)$, $Nil(R)$ to denote the set of nilpotent elements of $R$, and $Z(R)$ to denote the set of zero-divisors of $R$. If $I$ and $J$ are $R$-submodules of $T(R)$, then $(I : J) = \{x\in T(R) : xJ\subseteq I\}$. Badawi defined the divided prime ideal of $R$ in $\textup{\cite{bd}}$, as a prime ideal which is comparable to every ideal of $R$ and a ring $R$ is said to be divided if each prime ideal of $R$ is divided, see $\textup{\cite{bd}}$. In $\textup{\cite{badawi3}}$, Anderson and Badawi introduced the notion $\mathcal{H}$ to be the set of all rings $R$ such that $Nil(R)$ is a divided prime ideal and named these rings as $\phi$-rings. They used $\mathcal{H}_0$ to denote the subset of $\mathcal{H}$ such that $Nil(R) = Z(R)$. For further study on these rings, see $\textup{\cite{badawi3}}$, $\textup{\cite{badawi4}}$, $\textup{\cite{badawi}}$, $\textup{\cite{badawi8}}$, $\textup{\cite{badawi6}}$, $\textup{\cite{badawi9}}$, $\textup{\cite{badawi10}}$, $\textup{\cite{badawi7}}$, $\textup{\cite{badawi1}}$, $\textup{\cite{badawi11}}$, $\textup{\cite{badawi12}}$. 

In this paper, we will focus on the ring extensions in class $\mathcal{H}$ having the same set of prime ideals which was extensively studied by Anderson and Dobbs for integral domains, see $\textup{\cite{rahul}}$. Naturally, one may think to study these ring extensions for some larger class. Note that if $R$ is an integral domain, then $R$ is in $\mathcal{H}$. This motivates us to study the ring extensions in class $\mathcal{H}$ having the same set of prime ideals. Note that if $R\in \mathcal{H}$, then there is a ring homomorphism $\phi$ from $T(R)$ to $R_{Nil(R)}$ given by $\phi(r/s) = r/s$, for all $r\in R$ and $s\in R\setminus Z(R)$. Moreover, the restriction of $\phi$ to $R$ is also a ring homomorphism given by $\phi(r) = r/1$, for all $r\in R$, see $\textup{\cite{badawi}}$. Note that if $R\in \mathcal{H}_0$, then $\phi(R) = R$.

In this paper, we discuss examples using this idealization theory. In $\textup{\cite{nagata}}$, Nagata defined a new class of rings, namely idealization of a module. If $R$ is a ring and $M$ is an $R$-module, then the idealization $R(+)M$ is the ring defined as follows: Its additive structure is that of the abelian group $R \oplus M$, and its multiplication is defined by $\left(r_1,m_1\right) \left(r_2,m_2\right) := \left(r_1 r_2,r_1 m_2 + r_2 m_1\right)$, for all $ r_1, r_2\in R$ and $m_1, m_2 \in M$. It will be convenient to view $R$ as a subring  of $R(+)M$ via the canonical injective ring homomorphism that sends $r$ to $\left( r,0\right)$.

\section{Results} 

The ring extensions $R\subseteq T$ for which Spec($R$) = Spec($T$) are studied by Anderson and Dobbs in $\textup{\cite{rahul}}$. They discussed various type of domains $R$ for which there exist domains $T$ satisfying Spec($R$) = Spec($T$), and also given conditions on $R$ for which no such $T$ exists. In Lemma $3.1$, they proved that if $R\subseteq T$ is a ring extension with a common nonzero ideal $I$ which contains a non-zerodivisor of $R$, then $T$ is contained in the total quotient ring of $R$, and indeed $T\subseteq (I: I)$. The result does not seems to be correct. The proof goes wrong if for any nonzero $t \in T$ and for any non-zerodivisor $y \in I$ (of $R$), $ty = 0$. In fact, there is a class of ring extensions for which $ty = 0$ and hence thereby proving that $\textup{\cite[Lemma~3.1]{rahul}}$ is not correct. For example, consider $R = \mathbb{Z}$ and $T = \mathbb{Z}(+) \mathbb{Z}/2\mathbb{Z}$. Then $I = 2\mathbb{Z}$ is a common nonzero ideal with every nonzero element of $I$ is a non-zerodivisor of $R$. But $T\not\subseteq T(R)$. Note that this is true for all domains $R$ with nonzero maximal ideal $M$. In this case, take $T = R(+)R/M$. Then $M$ is a common nonzero ideal but $T\not\subseteq T(R)$. Note that if $ty \neq 0$, then the proof of $\textup{\cite[Lemma~3.1]{rahul}}$ will work. Thus, if we take $y$ to be a non-zerodivisor of $T$, then $\textup{\cite[Lemma~3.1]{rahul}}$ is correct with the same proof. We now state the modified result. 

\begin{Lemma}\label{lem}
Let $R\subseteq T$ be a ring extension with a common nonzero ideal $I$. If $I$ contains a non-zerodivisor of $T$, then $T$ is contained in the total quotient ring of $R$, and indeed $T\subseteq (I: I)$. 
\end{Lemma}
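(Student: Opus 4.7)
The plan is to use the non-zerodivisor $y \in I$ of $T$ to construct an explicit embedding of $T$ into $T(R)$ that restricts to the inclusion $R \hookrightarrow T(R)$. Once such an embedding is in hand, the containment $T \subseteq (I:I)$ is essentially immediate from the fact that $I$ is an ideal of $T$.

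First I would fix a non-zerodivisor $y \in I$ of $T$. Since $R \subseteq T$, this $y$ is automatically a non-zerodivisor of $R$, so $1/y$ makes sense in $T(R)$. The key observation is that for any $t \in T$ the product $ty$ lies in $I$ (because $I$ is an ideal of $T$), and hence in $R$ (because $I \subseteq R$). This lets me define a map $\psi : T \to T(R)$ by $\psi(t) = (ty)/y$, where the numerator is interpreted inside $R$.

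Next I would verify that $\psi$ is a ring homomorphism that extends the inclusion $R \hookrightarrow T(R)$. Additivity is immediate from the definition. For multiplicativity, note that
\[
\psi(t_1)\psi(t_2) = \frac{(t_1 y)(t_2 y)}{y^2} = \frac{t_1 t_2 y^2}{y^2} = \frac{t_1 t_2 y}{y} = \psi(t_1 t_2),
\]
where the cancellation of $y$ is legitimate because $y$ is a non-zerodivisor of $R$. For $r \in R$, $\psi(r) = (ry)/y = r$, so $\psi$ restricts to the identity on $R$. For injectivity, suppose $\psi(t) = 0$; then $ty = 0$ in $R$, hence in $T$, and since $y$ is a non-zerodivisor of $T$ this forces $t = 0$. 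Identifying $T$ with $\psi(T)$, we obtain $T \subseteq T(R)$.

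Finally, for the sharper containment $T \subseteq (I:I)$: given any $t \in T$ and any $a \in I$, we have $ta \in I$ because $I$ is an ideal of $T$. Together with $t \in T(R)$, this yields $t \in (I:I)$, completing the proof. The main obstacle, and the precise place the original argument broke down, is ensuring that $\psi$ is well-defined and injective; both points rely crucially on $y$ being a non-zerodivisor of $T$ rather than merely of $R$, which is exactly the hypothesis strengthened in the revised lemma.
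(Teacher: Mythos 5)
Your proof is correct and follows essentially the same route as the paper, which simply invokes the Anderson--Dobbs argument for \cite[Lemma~3.1]{rahul} with $y$ taken to be a non-zerodivisor of $T$: namely, $ty\in I\subseteq R$ forces $t=(ty)/y\in T(R)$, with injectivity and well-definedness of this identification resting exactly on $y$ being a non-zerodivisor of $T$. Your write-up merely makes the implicit embedding $t\mapsto (ty)/y$ and its verification explicit, which is a faithful elaboration rather than a different approach.
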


Also, in $\textup{\cite[Remark~3.4(a)]{rahul}}$, Anderson and Dobbs concluded the following: 

Let $R\subset T$ be a ring extension such that Spec($R$) = Spec($T$) and $M$ be the unique maximal ideal of both $R$ and $T$. If $M$ contains a non-zerodivisor of $R$, then $T\subseteq (M: M)$. 

They mentioned that the same proof of $\textup{\cite[Lemma~3.1]{rahul}}$ will work to prove the above result. As $\textup{\cite[Lemma~3.1]{rahul}}$ has been modified, some argument is needed to establish the above result. Note that $M$ contains a non-zerodivisor of $T$. Otherwise $M = Z(T)$ which gives $M = Z(R)$ as Spec($R$) = Spec($T$). This contradicts that $M$ contains a non-zerodivisor of $R$. Thus, the result follows by Lemma $\ref{lem}$. 

Let $R\subset S$ be an extension of integral domains. Then $R$ is a said to be a pseudo-valuation subring (PV, for short) in $S$, see $\textup{\cite{ayache1}}$, if for each $x\in S\setminus R$ and non-unit $a$ of $R$, we have $x^{-1}a\in R$. Note that PV in a domain is a generalization of PVD. We now continue to investigate the properties of ring extensions having the same set of prime ideals initiated in $\textup{\cite{rahul}}$. Here is our first result.

\begin{Theorem}\label{r1}
Let $R\subseteq T\subset U$ be an extension of integral domains such that $Spec(R) = Spec(T)$. Then $R$ is a PV in $U$ if and only if $T$ is a PV in $U$.
\end{Theorem}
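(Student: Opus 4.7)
My plan begins with a preparatory observation that follows purely from $\operatorname{Spec}(R) = \operatorname{Spec}(T)$. Since $R$ and $T$ share the same maximal ideals, and every maximal ideal $M$ of $T$ is also a prime ideal of $R$ (hence $M \subseteq R$), I would first show: every non-unit of $T$ already lies in $R$ and is a non-unit of $R$, and dually, every element of $T \setminus R$ is a unit of $T$. The first follows because a non-unit sits in some maximal ideal of $T$, which is contained in $R$; the second because if $x \in T \setminus R$ were a non-unit of $T$, the same argument would force $x \in R$.

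For the forward direction, suppose $R$ is a PV in $U$, and let $x \in U \setminus T$ and $a$ a non-unit of $T$. The preparatory observation gives $a \in R$ and $a$ a non-unit of $R$, while $x \in U \setminus T \subseteq U \setminus R$. Applying the PV property of $R$ in $U$ yields $x^{-1}a \in R \subseteq T$, which is exactly what is needed.

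For the backward direction, assume $T$ is a PV in $U$, and take $x \in U \setminus R$ with $a$ a non-unit of $R$; then $a$ is a non-unit of $T$ as well. I would split into two cases. If $x \notin T$, the PV property of $T$ in $U$ gives $x^{-1}a \in T$ directly. If instead $x \in T$, then $x \in T \setminus R$, so by the preparatory observation $x$ is a unit of $T$, and again $x^{-1}a \in T$. In either case $x^{-1}a \in T$, and it remains only to push it down into $R$.

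To finish, I would argue that $x^{-1}a$ is necessarily a non-unit of $T$: otherwise $a = x \cdot (x^{-1}a)$ would be a product of units of $T$ (using that $x$ is a unit of $T$, which in the first subcase holds because $x \in U \setminus T$ implies $x^{-1} \in U$ and $x^{-1}a \in T$ together with $a$ non-unit force... actually let me simply observe that if $x^{-1}a$ were a unit of $T$ then $a \cdot (x^{-1}a)^{-1} = x \in T$ would force $a$ to be a unit of $T$, contradicting that $a$ is a non-unit of $R$). Hence $x^{-1}a$ is a non-unit of $T$, so it lies in a maximal ideal of $T$, which is a maximal ideal of $R$ and therefore contained in $R$; thus $x^{-1}a \in R$, as required. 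The main subtlety I expect is exactly this last step in the backward direction — ensuring the intermediate element $x^{-1}a$ can be pulled back from $T$ into $R$ — but the unit/non-unit dictionary induced by $\operatorname{Spec}(R) = \operatorname{Spec}(T)$ handles it cleanly.
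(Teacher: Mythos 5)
Your proposal is correct and follows essentially the same route as the paper's proof: the same unit/non-unit dictionary induced by $\operatorname{Spec}(R)=\operatorname{Spec}(T)$, the same easy forward direction, and the same case split ($x\notin T$ versus $x\in T$, where $x$ is then a unit of $T$) in the converse, ending by showing $x^{-1}a$ is a non-unit of $T$ and hence lies in a maximal ideal of $T$, which is contained in $R$. You merely make explicit the preparatory observations that the paper leaves implicit.
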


\begin{proof}
Assume that $R$ is a PV in $U$. Take $x\in U\setminus T$ and a non-unit $y$ in $T$. Then $y$ is also a non-unit in $R$ as Spec($R$) = Spec($T$). It follows that $x^{-1}y\in R$. Thus, $T$ is a PV in $U$. Conversely, suppose that $T$ is a PV in $U$. Take $x\in U\setminus R$ and $y$ a non-unit in $R$. Then $y$ is also a non-unit in $T$ as Spec($R$) = Spec($T$). If $x\notin T$, then $x^{-1}y\in T$ which is not a unit in $T$ as otherwise $x\in T$. It follows that $x^{-1}y\in R$ and we are done. On the other hand, if $x\in T$, then $x$ is a unit in $T$ as Spec($R$) = Spec($T$). It follows that $x^{-1}y$ is a non-unit in $T$ and so $x^{-1}y\in R$. Thus, $R$ is a PV in $U$.
\end{proof}

Note that the above proposition generalizes $\textup{\cite[Proposition~2.2]{rahul}}$ to PV in a domain. In Proposition $\ref{r3}$, we generalize the same to $\phi$-PVR as $\phi$-PVR generalizes PVD. Recall from $\textup{\cite{badawi}}$ that a ring $R \in \mathcal{H}$ is said to be a $\phi$-pseudo-valuation ring ($\phi$-PVR, for short), if each prime ideal $P$ of $R$ is $\phi$-strongly prime, that is, if $xy\in \phi(P)$ for $x, y\in R_{Nil(R)}$, then either $x\in \phi(P)$ or $y\in \phi(P)$. First, we need the following lemma.    

\begin{Lemma}\label{r2}
Let $R$ be a ring in $\mathcal{H}$. If $T$ is a ring containing $R$ such that $Spec(R) = Spec(T)$, then $T\in \mathcal{H}$.
\end{Lemma}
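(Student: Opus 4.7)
The plan is to show that $Nil(T)$ is a divided prime ideal of $T$. Since the nilradical of any commutative ring equals the intersection of its prime ideals, the hypothesis $Spec(R) = Spec(T)$ immediately gives $Nil(R) = Nil(T)$ as subsets; write $P$ for this common ideal. Because $R \in \mathcal{H}$, $P$ is prime in $R$, and since $P \in Spec(R) = Spec(T)$, $P$ is also prime in $T$. So the task reduces to verifying that $P = Nil(T)$ is divided in $T$.

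To this end, I would show that every principal ideal $tT$ of $T$ is comparable with $P$. The crucial observation is that every maximal ideal $M$ of $T$ is, by hypothesis, a prime ideal of $R$, and in particular $M \subseteq R$ as a subset. This gives a clean three-case analysis on $t \in T$. If $t \in P$, then $tT \subseteq P$ since $P$ is an ideal of $T$. If $t \in R \setminus P$, the divided property of $P$ in $R$ yields $P \subseteq tR \subseteq tT$. Finally, if $t \in T \setminus R$, then $t$ belongs to no maximal ideal of $T$ (since each such sits in $R$), so $t$ is a unit in $T$ and $tT = T \supseteq P$.

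The main conceptual obstacle is the third case, where the divided hypothesis on $R$ does not directly constrain $t$ since $t$ lies outside $R$. This is resolved by the elementary but essential observation that the maximal ideals of $T$ are forced to lie inside $R$, which automatically makes every element of $T \setminus R$ a unit of $T$. Together, the three cases show that $P$ is comparable to every principal ideal, hence to every ideal, of $T$, so $Nil(T)$ is a divided prime ideal of $T$ and $T \in \mathcal{H}$.
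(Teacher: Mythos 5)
Your proof is correct and rests on the same key observation as the paper's: since $Spec(R) = Spec(T)$, every maximal ideal of $T$ is a prime ideal of $R$ and hence a subset of $R$, which lets the divided structure of $Nil(R)$ in $R$ control the ideals of $T$. The paper packages this slightly more directly---every proper ideal of $T$ lies in a maximal ideal of $T$, hence is an ideal of $R$, and is therefore comparable with $Nil(R)$ by dividedness in $R$---whereas you reduce to principal ideals $tT$ and handle $t \in T \setminus R$ via the unit argument, but the substance is the same.
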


\begin{proof}
Clearly, $Nil(T) = Nil(R)$ is a prime ideal of $T$. It remains to show that $Nil(R)$ is divided in $T$. Let $I$ be a proper ideal of $T$. Then $I$ is an ideal of $R$ as Spec($R$) = Spec($T$). Thus, $Nil(R)$ is divided in $T$.
\end{proof}

\begin{Proposition}\label{r3}
Let $R\subseteq T$ be a ring extension such that $R\in \mathcal{H}$ and $Spec(R) = Spec(T)$. Then $R$ is a $\phi$-PVR if and only if $T$ is a $\phi$-PVR.
\end{Proposition}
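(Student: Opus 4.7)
My plan is to reduce the assertion to the PVD case after passing to the quotient by the common nilradical. By Lemma \ref{r2}, the hypotheses imply $T \in \mathcal{H}$. Both $Nil(R)$ and $Nil(T)$ are nilradicals that happen to be prime, hence each is the unique minimal prime of its ring; since Spec$(R)$ = Spec$(T)$, these minimal primes coincide, so $Nil(R) = Nil(T)$. Call this common prime $N$. Then $R/N \subseteq T/N$ is an extension of integral domains, and because every prime of $R$ (or of $T$) contains $N$, the correspondence theorem yields Spec$(R/N)$ = Spec$(T/N)$.

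Next, I would appeal to the characterization proved in \cite{badawi}: for $S \in \mathcal{H}$, $S$ is a $\phi$-PVR if and only if $S/Nil(S)$ is a PVD. Applying this to both $R$ and $T$ reduces the present proposition to the single assertion that $R/N$ is a PVD iff $T/N$ is a PVD. That assertion is, however, exactly \cite[Proposition~2.2]{rahul} applied to the extension of integral domains $R/N \subseteq T/N$ with identical prime spectra. Chaining the three equivalences finishes the proof.

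The step I expect to require the most care is the invocation of the Badawi characterization; it must be cited in precisely the form ``$S$ is a $\phi$-PVR iff $S/Nil(S)$ is a PVD''. Granting this, the argument is essentially a two-step reduction and fits the spirit of the paper, which repeatedly exploits the $\mathcal{H}$-to-domain bridge via $Nil$. If for some reason the characterization were not at hand in that exact form, a direct fallback would verify that the natural map $R_{Nil(R)} \to T_{Nil(T)}$ is an isomorphism and that $\phi_R(P)$ coincides with $\phi_T(P)$ as subsets of this common ring for each common prime $P$, after which the $\phi$-strongly prime condition on $P$ transfers between $R$ and $T$ tautologically; this direct route is workable but noticeably more technical than the quotient reduction above.
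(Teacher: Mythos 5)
Your proposal is correct and follows essentially the same route as the paper's proof: both reduce to the domain case by passing to the quotient by the common nilradical, invoke the characterization that a ring in $\mathcal{H}$ is a $\phi$-PVR if and only if its quotient by the nilradical is a PVD (the paper cites \textup{\cite[Proposition~2.9]{badawi9}} for this), and then apply \textup{\cite[Proposition~2.2]{rahul}} to the resulting extension of domains with identical spectra. The only difference is cosmetic: the paper cites the characterization from \textup{\cite{badawi9}} rather than \textup{\cite{badawi}}, and is more terse about why $Nil(R)=Nil(T)$.
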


\begin{proof}
Note that $Nil(R)$ is a prime ideal of both $R$ and $T$. It follows that Spec($R/Nil(R)$) = Spec($T/Nil(R)$). Now, if $R$ is a $\phi$-PVR, then $R/Nil(R)$ is a PVD, by $\textup{\cite[Proposition~2.9]{badawi9}}$. It follows that $T/Nil(R)$ is a PVD, by $\textup{\cite[Proposition~2.2]{rahul}}$. Also, $T \in \mathcal{H}$, by Lemma $\ref{r2}$. Thus, by another appeal to $\textup{\cite[Proposition~2.9]{badawi9}}$, $T$ is a $\phi$-PVR. The converse is similar. 
\end{proof} 

A domain $R$ is called a going-down domain if $R\subseteq T$ satisfies the going-down property for each overring $T$ of $R$, see $\textup{\cite{dobbs}}$; and a ring $R$ is called a going-down ring if $R/P$ is a going-down domain for all prime ideals $P$ of $R$, see $\textup{\cite{dobbs1}}$. The next theorem generalizes $\textup{\cite[Corollary~A.4]{rahul}}$ and $\textup{\cite[Proposition~A.1]{rahul}}$ to rings in $\mathcal{H}$. 

\begin{Theorem}\label{r20}
Let $R\in \mathcal{H}$. If $T$ is a ring containing $R$ properly such that $Spec(R) = Spec(T)$, then 
\begin{enumerate}
\item[(i)] $R$ is not a going down ring. 
\item[(ii)] $R$ is divided if and only if $T$ is divided.
\end{enumerate}
\end{Theorem}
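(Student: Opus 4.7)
The plan is to reduce each assertion to the corresponding statement for integral domains by quotienting out $Nil(R)$. Since $R\in\mathcal{H}$, the ideal $Nil(R)$ is prime, and from $\mathrm{Spec}(R)=\mathrm{Spec}(T)$ it follows that $Nil(T)=\bigcap_{P\in\mathrm{Spec}(T)}P=\bigcap_{P\in\mathrm{Spec}(R)}P=Nil(R)$, so this prime is common to both rings. By Lemma \ref{r2}, $T\in\mathcal{H}$ as well. The induced inclusion $R/Nil(R)\subseteq T/Nil(R)$ is then a proper extension of integral domains with the same spectrum: any $t\in T\setminus R$ satisfies $t+Nil(R)\notin R/Nil(R)$ (else $t\in R+Nil(R)=R$), and since every prime of $R$ (equivalently $T$) contains $Nil(R)$, the quotient spectra coincide via the standard correspondence.

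For (i), I would apply \cite[Corollary A.4]{rahul} to the proper extension $R/Nil(R)\subset T/Nil(R)$ to conclude that $R/Nil(R)$ is not a going-down domain. Since the definition of going-down ring requires $R/P$ to be a going-down domain for every $P\in\mathrm{Spec}(R)$, and $Nil(R)$ is itself a prime of $R$, this immediately yields that $R$ is not a going-down ring.

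For (ii), the key auxiliary fact I would first establish is that a ring $S\in\mathcal{H}$ is divided if and only if $S/Nil(S)$ is a divided domain. The forward direction is clear by passing comparabilities to the quotient. For the converse, given a prime $P$ of $S$ and an arbitrary ideal $I$ of $S$, one uses that $Nil(S)$ is itself divided in $S$, so either $I\subseteq Nil(S)\subseteq P$ (and $P$ is comparable with $I$ trivially), or $Nil(S)\subseteq I$, in which case $I/Nil(S)$ is an ideal of $S/Nil(S)$ and the comparability of $P/Nil(S)$ with $I/Nil(S)$ lifts to that of $P$ with $I$. Applying this equivalence to both $R$ and $T$ and invoking \cite[Proposition A.1]{rahul} for the domain extension $R/Nil(R)\subset T/Nil(R)$, the biconditional in (ii) follows.

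The main obstacle is the characterisation used in the last paragraph: one has to use the full strength of $R\in\mathcal{H}$ (namely, that $Nil(R)$ is a \emph{divided} prime) precisely to ensure that the comparability of ideals descends to and lifts from $R/Nil(R)$ cleanly. Once that is in hand, both parts of the theorem collapse to direct appeals to the cited domain results from \cite{rahul}.
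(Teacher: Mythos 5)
Your proof is correct and follows essentially the same strategy as the paper: reduce modulo $Nil(R)$ (noting $Nil(R)=Nil(T)$ is a common divided prime and that the quotient extension of domains is proper with equal spectra) and then invoke the Anderson--Dobbs results \cite[Corollary~A.4]{rahul} and \cite[Proposition~A.1]{rahul}. The only minor divergence is in the forward direction of (ii), where the paper argues directly that every proper ideal of $T$ is already an ideal of $R$, so comparability with a divided prime of $R$ is immediate, whereas you route that direction through the quotient characterisation as well; both arguments are valid.
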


\begin{proof}
To prove $(i)$, note that $Nil(R)$ is a prime ideal of both $R$ and $T$. It follows that Spec($R/Nil(R)$) = Spec($T/Nil(R)$). Now, if $R$ is a going down ring, then $R/Nil(R)$ is a going down domain, which is a contradiction by $\textup{\cite[Corollary~A.4]{rahul}}$. 

For $(ii)$, let $R$ be divided and $P$ be any prime ideal of $T$. Then $P$ is a divided prime ideal of $R$. Let $I$ be a proper ideal of $T$. Then $I$ is a proper ideal of $R$ and so is comparable with $P$. Thus, $T$ is divided. Conversely, assume that $T$ is divided. Then $T/Nil(R)$ is divided and so $R/Nil(R)$ is divided, by $\textup{\cite[Proposition~A.1]{rahul}}$. Since $Nil(R)$ is divided, we conclude that $R$ is divided. 
\end{proof}  

A ring $R$ in $\mathcal{H}$ is said to be a $\phi$-chained ring, if for each $x\in R_{Nil(R)}\setminus \phi(R)$, we have $x^{-1}\in \phi(R)$, see $\textup{\cite{badawi6}}$. As a direct consequence of $\textup{\cite[Proposition~3.3]{badawi6}}$, $\textup{\cite[Corollary~3.4]{badawi6}}$, and $\textup{\cite[Proposition~3.8]{rahul}}$, we observe the generalization of $\textup{\cite[Proposition~2.5]{rahul}}$ to rings in $\mathcal{H}$.

\begin{Proposition}\label{r4}
Let $R\in \mathcal{H}$ be a local ring with maximal ideal $M$. If $M$ contains a non-zerodivisor, then the following are equivalent:
\begin{enumerate}
\item[(1)] $R$ is a $\phi$-PVR;
\item[(2)] $(M : M)$ is a $\phi$-chained overring of $R$ with maximal ideal $M$;
\item[(3)] $R$ has a $\phi$-chained overring $V$ such that $Spec(R) = Spec(V)$.
\end{enumerate}
\end{Proposition}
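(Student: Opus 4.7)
The plan, paralleling the reductions used in the proofs of Proposition~\ref{r3} and Theorem~\ref{r20}(ii), is to pass to the local domain $\bar R := R/Nil(R)$ and apply the PVD analogue~\textup{\cite[Proposition~2.5]{rahul}}. Since $M$ contains a non-zerodivisor of $R$ and $Nil(R)$ is divided, $Nil(R)\subsetneq M$, so $\bar R$ is a local domain with nonzero maximal ideal $\bar M := M/Nil(R)$; this sets up the domain-theoretic picture needed to invoke the cited result.

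Next I would translate each of (1)--(3) into a statement about $\bar R$. For (1), \textup{\cite[Proposition~2.9]{badawi9}} says that $R$ is a $\phi$-PVR iff $\bar R$ is a PVD. For (3), Lemma~\ref{r2} forces any such overring $V$ into $\mathcal H$ with $Nil(V)=Nil(R)$, and then \textup{\cite[Proposition~3.3]{badawi6}} together with \textup{\cite[Corollary~3.4]{badawi6}} let me identify ``$V$ is $\phi$-chained'' with ``$V/Nil(R)$ is a valuation domain'', while the spectra match after quotient; thus (3) becomes the statement that $\bar R$ has a valuation overring with $\mathrm{Spec}(\bar R)=\mathrm{Spec}(\bar V)$. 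For (2), I first invoke Lemma~\ref{lem} (applicable because $M$ is a common ideal of $R$ and $(M:M)$ containing a non-zerodivisor of $R$, hence of $(M:M)$) to place $(M:M)\subseteq T(R)$; then a direct computation identifies $(M:M)/Nil(R)$ with $(\bar M:\bar M)\subseteq T(\bar R)$, and the $\phi$-chained condition on $(M:M)$ corresponds, again via \textup{\cite[Proposition~3.3]{badawi6}}, to the chained condition on $(\bar M:\bar M)$.

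Once the three conditions have been translated, the equivalence follows by applying~\textup{\cite[Proposition~2.5]{rahul}} to $\bar R$. The main obstacle will be the bookkeeping in (2): verifying that forming $(M:M)$ commutes with the quotient by $Nil(R)$ and that the ``maximal ideal equals $M$'' condition lifts and descends cleanly. I expect~\textup{\cite[Proposition~3.8]{rahul}}, cited in the preamble to this proposition, to supply the domain-level identification that closes this gap and reduces everything to a purely formal invocation of the PVD theorem.
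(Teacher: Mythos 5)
The paper does not actually prove Proposition~\ref{r4} by reduction modulo $Nil(R)$: it records the statement as a direct consequence of \textup{\cite[Proposition~3.3]{badawi6}} and \textup{\cite[Corollary~3.4]{badawi6}} (which already establish the $\phi$-ring analogue of the PVD characterization, essentially the equivalence of (1) and (2) at the level of $\phi(R)\subseteq R_{Nil(R)}$) together with \textup{\cite[Proposition~3.8]{rahul}}. Your plan of deducing everything from the domain case \textup{\cite[Proposition~2.5]{rahul}} via $\bar R=R/Nil(R)$ is therefore a genuinely different route, and for conditions (1) and (3) it is workable: \textup{\cite[Proposition~2.9]{badawi9}} handles (1), and for (3) one checks $Nil(V)=Nil(R)$ and that $V/Nil(R)$ is an overring of $\bar R$ (which does hold, since $Nil(T(R))=Nil(R)$ for $R\in\mathcal{H}$ and $T(R)/Nil(R)$ embeds in $T(\bar R)$).

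The genuine gap is in your treatment of (2), precisely the step you defer as ``bookkeeping.'' The identification $(M:M)/Nil(R)=(\bar M:\bar M)$ is not a direct computation. The inclusion $(M:M)/Nil(R)\subseteq(\bar M:\bar M)$ is immediate, but the reverse inclusion requires lifting an element of $T(\bar R)$ to $T(R)$, and for $R\in\mathcal{H}\setminus\mathcal{H}_0$ one has $Z(R)\supsetneq Nil(R)$, so non-zerodivisors of $\bar R$ need not lift to non-zerodivisors of $R$ and $T(R)/Nil(R)$ is a \emph{proper} subring of $T(\bar R)$. Thus $(\bar M:\bar M)$, computed in the larger total quotient ring, may a priori strictly contain $(M:M)/Nil(R)$, and a subring of a valuation domain need not be a valuation domain; so the implication $(1)\Rightarrow(2)$ does not follow from the quotient picture without further argument. (Proposition~\ref{r4} is stated for all of $\mathcal{H}$, not just $\mathcal{H}_0$; this mismatch between $T(R)$ and $T(R/Nil(R))$ is exactly the phenomenon Badawi's map $\phi:T(R)\to R_{Nil(R)}$ is designed to circumvent, and it is why the paper leans on \textup{\cite[Proposition~3.3]{badawi6}} and \textup{\cite[Corollary~3.4]{badawi6}} rather than a bare passage to $R/Nil(R)$.) Your hope that \textup{\cite[Proposition~3.8]{rahul}} closes this gap is misplaced: that result concerns pairs of rings with the same prime ideals and says nothing about the compatibility of conductors with the quotient by the nilradical. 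A smaller point: invoking Lemma~\ref{lem} to place $(M:M)$ inside $T(R)$ is superfluous, since by the paper's definition $(I:J)$ is already a subset of $T(R)$.
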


The next corollary is a companion to Theorem $\ref{r20}$ and Proposition $\ref{r4}$ which can be seen as a generalization of $\textup{\cite[Proposition~2.2]{hed}}$ to rings in $\mathcal{H}$.  

\begin{Corollary}\label{r22}
Let $R\in \mathcal{H}$ be a local ring such that the maximal ideal contains a non-zerodivisor of $R$. Then $R$ is a $\phi$-chained ring if and only if $R$ is both a going down ring and a $\phi$-PVR.
\end{Corollary}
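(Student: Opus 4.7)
My plan is to deduce the corollary entirely from Proposition $\ref{r4}$, Theorem $\ref{r20}$(i), and the known fact that $R$ is a $\phi$-chained ring in $\mathcal{H}$ if and only if $R/Nil(R)$ is a chained (i.e.\ valuation) domain, which is the content of \cite[Proposition~3.3]{badawi6}. Note throughout that since $Nil(R)$ is a divided prime, it is the unique minimal prime of $R$, so every prime of $R$ contains $Nil(R)$ and quotienting by a prime $P$ of $R$ is the same as quotienting $R/Nil(R)$ by a prime of that domain.

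For the ``only if'' direction, assume $R$ is a $\phi$-chained ring. Then $R$ itself serves as a $\phi$-chained overring of $R$ with $\mathrm{Spec}(R)=\mathrm{Spec}(R)$, so by the implication $(3)\Rightarrow (1)$ of Proposition $\ref{r4}$, $R$ is a $\phi$-PVR. To see that $R$ is a going-down ring, use that $R/Nil(R)$ is a valuation domain; since valuation domains are going-down, and the class of going-down domains is stable under passage to quotients by prime ideals, $R/P$ is a going-down domain for every prime $P$ of $R$. Hence $R$ is a going-down ring.

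For the ``if'' direction, assume $R$ is both a going-down ring and a $\phi$-PVR. By the implication $(1)\Rightarrow (3)$ of Proposition $\ref{r4}$, there exists a $\phi$-chained overring $V$ of $R$ with $\mathrm{Spec}(R)=\mathrm{Spec}(V)$. If we had $R\subsetneq V$, then Theorem $\ref{r20}$(i) would force $R$ not to be a going-down ring, contradicting the hypothesis. Hence $V=R$, so $R$ is itself a $\phi$-chained ring.

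The main obstacle, such as it is, lies in the forward direction: one must justify that ``$R$ is going-down'' reduces cleanly to ``$R/Nil(R)$ is going-down,'' which works precisely because $Nil(R)$ being a divided prime makes it the unique minimal prime, so all primes of $R$ live above $Nil(R)$. The rest is bookkeeping against the two earlier results.
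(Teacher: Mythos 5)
Your proposal is correct and follows essentially the same route as the paper: the forward direction reduces to $R/Nil(R)$ being a valuation (hence going-down) domain, and the converse combines Proposition \ref{r4} with Theorem \ref{r20}(i) exactly as in the paper's proof. The only cosmetic differences are that you obtain ``$\phi$-chained $\Rightarrow$ $\phi$-PVR'' from Proposition \ref{r4} with $V=R$ where the paper calls it trivial, and you carry out by hand (via the unique minimal prime $Nil(R)$ and quotient-stability, which in this case follows because quotients of valuation domains by primes are valuation domains) the reduction that the paper delegates to \cite[Proposition~2.1(a)]{dobbs1}.
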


\begin{proof}
Let $R$ be a $\phi$-chained ring. Then it is a $\phi$-PVR trivially. Also, $R/Nil(R)$ is a valuation domain, by $\textup{\cite[Theorem~2.7]{badawi3}}$. It follows that $R/Nil(R)$ is a going down domain. Consequently, by $\textup{\cite[Proposition~2.1(a)]{dobbs1}}$, $R$ is going down. 

Conversely,  assume that $R$ is both a going down ring and a $\phi$-PVR. Then by Proposition $\ref{r4}$, it follows that $R$ has a $\phi$-chained overring $V$ such that Spec($R$) = Spec($V$). Thus, $R = V$, by part $(i)$ of Theorem $\ref{r20}$. 
\end{proof}  

The next proposition was proved by Anderson and Dobbs for domains, see $\textup{\cite[Proposition~3.5(b)]{rahul}}$. Note that the same is true for rings also as the proof of $\textup{\cite[Proposition~3.5(b)]{rahul}}$ do not requires the ring to be a domain. Thus, the proof of next proposition follows mutatis mutandis from the proof of $\textup{\cite[Proposition~3.5(b)]{rahul}}$. 

\begin{Proposition}\label{r5}
Let $R\subseteq T$ be rings such that $Spec(R) = Spec(T)$. Then $R_{P} = T_{T\setminus P}$ for each non-maximal prime ideal $P$ of $R$.
\end{Proposition}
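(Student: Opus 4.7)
The plan is to establish the containments $R_{P}\subseteq T_{T\setminus P}$ and $T_{T\setminus P}\subseteq R_{P}$ following the Anderson--Dobbs template, verifying that no step actually uses the domain hypothesis.

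For the first inclusion, since $R\setminus P\subseteq T\setminus P$, the universal property of localization provides a canonical ring homomorphism $\alpha\colon R_{P}\to T_{T\setminus P}$. I would verify injectivity of $\alpha$ as follows: if $r/s\in\ker\alpha$ with $r\neq 0$, then $ur=0$ for some $u\in T\setminus P$, so $\mathrm{Ann}_{T}(r)$ is a proper ideal of $T$ meeting $T\setminus P$. Being a proper ideal of $T$, it is contained in some maximal ideal $M$ of $T$, which equals a maximal ideal of $R$ by $\mathrm{Spec}(R)=\mathrm{Spec}(T)$, so $\mathrm{Ann}_{T}(r)\subseteq M\subseteq R$; this exhibits an element of $R\setminus P$ annihilating $r$ in $R$, and hence $r/s=0$ in $R_{P}$. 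Thus $R_{P}\hookrightarrow T_{T\setminus P}$.

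For the reverse containment, any element of $T_{T\setminus P}$ has the form $t/u$ with $t\in T$ and $u\in T\setminus P$, so it suffices to show that every $t\in T$ lies in $R_{P}$, i.e., that the ideal $(R:_{R}t)=\{r\in R\mid rt\in R\}$ is not contained in $P$. Granted this, any $s\in(R:_{R}t)\setminus P$ yields $t=(st)/s\in R_{P}$; applying the same to the denominator $u$ shows $1/u\in R_{P}$ as well.

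The core step is therefore the claim $(R:_{R}t)\not\subseteq P$. I would argue by contradiction, using the non-maximality of $P$: choose a prime $Q$ of $R$ with $P\subsetneq Q$ (also a prime of $T$ by hypothesis), and exploit the presence of this extra prime to produce the desired $s\in R\setminus P$ with $st\in R$. This is the main obstacle, and it is where both hypotheses---non-maximality of $P$ and the equality of spectra---are essentially used. Since the author has noted that the Anderson--Dobbs argument transfers verbatim (it relies only on ideal- and module-theoretic facts, not on cancellation or a common fraction field), I expect this step to require no new ingredient beyond a careful translation of their proof to the setting where $R$ and $T$ may have zero-divisors.
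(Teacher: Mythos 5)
Your first containment is fine, and your reduction of the second containment to the claim $(R:_{R}t)\not\subseteq P$ for every $t\in T$ is the right reduction; but you stop exactly where the actual work happens. Saying that you would ``exploit the presence of this extra prime'' and that you ``expect this step to require no new ingredient'' is not an argument: the mechanism that produces $s$ is never identified, so the proposal has a genuine gap at its central step. The missing idea is the common-ideal observation that drives the whole Anderson--Dobbs proof and survives the passage to rings with zero-divisors untouched: since $\mathrm{Spec}(R)=\mathrm{Spec}(T)$, every prime ideal $Q$ of $R$ is literally a prime ideal of $T$, hence an ideal of $T$, hence a $T$-submodule of $T$ contained in $R$. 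Therefore $tQ\subseteq Q\subseteq R$ for every $t\in T$, i.e.\ $Q\subseteq (R:_{R}t)$ for \emph{every} prime $Q$ of $R$. Non-maximality now enters: choose a maximal ideal $M$ of $R$ with $P\subset M$; then $M\subseteq (R:_{R}t)$ while $M\not\subseteq P$, so $(R:_{R}t)\not\subseteq P$, and any $s\in M\setminus P$ gives $st\in R$ and $t=(st)/s\in R_{P}$ (applied to a denominator $u\in T\setminus P$ this also inverts $u$ in $R_{P}$, as you note). With that one line supplied your outline closes up; without it the proposition is not proved. For what it is worth, the paper gives no independent argument either --- it simply records that the proof of $\textup{\cite[Proposition~3.5(b)]{rahul}}$ never uses the domain hypothesis --- but the step you deferred is precisely the entire content of that proof.
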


The next proposition generalizes $\textup{\cite[Corollary~3.20(2)]{rahul}}$ to rings in $\mathcal{H}$. 

\begin{Proposition}\label{r6}
Let $R\subseteq T$ be a ring extension such that $T\in \mathcal{H}$ and $R$ is local with maximal ideal $M$. Assume that $M$ is a finitely generated ideal of $R$. Then $Spec(R) = Spec(T)$ if and only if $M\in Spec(T)$. 
\end{Proposition}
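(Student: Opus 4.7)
The forward implication is immediate: $M$ is prime in $R$, hence prime in $T$ by the assumed equality of spectra. For the converse, assume $M\in $ Spec($T$). The strategy is to reduce to the domain case $\textup{\cite[Corollary~3.20(2)]{rahul}}$ by passing to the quotient modulo the nilradical.

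The pivotal observation is that $Nil(R) = Nil(T)$. Since $Nil(T)$ is contained in every prime of $T$, we have $Nil(T)\subseteq M\subseteq R$, so $Nil(T)$ actually sits inside $R$. Combined with the automatic inclusions $Nil(R)\subseteq Nil(T)$ and $Nil(T)\cap R\subseteq Nil(R)$, this forces $Nil(R) = Nil(T)$. This is the step where the hypothesis $M\in $ Spec($T$) enters essentially; without it there is no reason for $Nil(T)$ to lie inside $R$, and the reduction to the domain case breaks down.

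Next I pass to the quotient extension $R/Nil(R)\hookrightarrow T/Nil(T)$. Since $T\in \mathcal{H}$, $Nil(T)$ is prime in $T$, so $T/Nil(T)$ is a domain, and the subring $R/Nil(R)$ is a domain as well. The ring $R/Nil(R)$ is local with maximal ideal $M/Nil(R)$, which is finitely generated because $M$ is, and $M/Nil(R)$ is prime in $T/Nil(T)$ since $M$ is prime in $T$ and contains $Nil(T)$. Hence $\textup{\cite[Corollary~3.20(2)]{rahul}}$ applies and yields Spec($R/Nil(R)$) = Spec($T/Nil(T)$).

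Finally, since every prime of $R$ contains $Nil(R)$ and every prime of $T$ contains $Nil(T) = Nil(R)$, the standard bijection between primes of a ring and primes of its quotient by the nilradical lifts this equality back to Spec($R$) = Spec($T$). The main technical step is the identification $Nil(R) = Nil(T)$; everything else is a routine unpacking of the cited domain result.
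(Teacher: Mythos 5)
Your proposal is correct and follows essentially the same route as the paper: establish $Nil(R)=Nil(T)$ from $M\in\mathrm{Spec}(T)$, pass to the domain extension $R/Nil(R)\subseteq T/Nil(T)$, and invoke \cite[Corollary~3.20]{rahul}. Your justification of $Nil(R)=Nil(T)$ (via $Nil(T)\subseteq M\subseteq R$, using that $Nil(T)$ is prime since $T\in\mathcal{H}$) usefully spells out a step the paper leaves implicit.
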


\begin{proof}
The ``only if'' assertion is trivial. For the converse part, assume that $M\in \mbox{Spec}(T)$. Then $Nil(T) = Nil(R)$. Also, $R/Nil(R)$ is local with finitely generated maximal ideal $M/Nil(R)$ and $T/Nil(R)$ is a domain. It follows that Spec($R/Nil(R)$) = Spec($T/Nil(R)$), by $\textup{\cite[Corollary~3.20]{rahul}}$. Thus, the result holds. 
\end{proof}

\begin{Remark}\label{r7}
Let $R$ be a Pr\"ufer domain which is not a field. Then there is no ring $T$ which contains $R$ properly such that Spec($R$) = Spec($T$), see $\textup{\cite[Remark~3.17(b)]{rahul}}$. Note that this can be extended to $\phi$-Pr\"ufer rings also (A ring $R\in \mathcal{H}$ is said to be a $\phi$-Pr\"ufer ring, see $\textup{\cite{badawi3}}$, if $\phi(R)$ is a Pr\"ufer ring). More precisely, if $R$ is a $\phi$-Pr\"ufer ring such that $Nil(R)$ is not a maximal ideal of $R$ and $T$ is a ring containing $R$ such that Spec($R$) = Spec($T$), then $R = T$. To see this, first note that $Nil(R) = Nil(T)$. It follows that both $R/Nil(R)$ and $T/Nil(R)$ have the same set of prime ideals. As by $\textup{\cite[Theorem~2.6]{badawi3}}$, $R/Nil(R)$ is a Pr\"ufer domain,  we have $R/Nil(R)  = T/Nil(R)$ and so $R = T$.
\end{Remark}

In the next proposition, we investigate the overrings of $R \in \mathcal{H}_0$ having the same set of prime ideals. Moreover, it can be seen as a generalization of $\textup{\cite[Corollary~3.21]{rahul}}$. 

\begin{Proposition}\label{r8}
Let $R\in \mathcal{H}_0$ be a local ring with maximal ideal $M$ such that $R'$ is a Pr\"ufer ring. Let $T$ be an overring of $R$. Then $Spec(R) = Spec(T)$ if and only if $M\in Spec(T)$.
\end{Proposition}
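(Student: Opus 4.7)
The forward direction is immediate: if $Spec(R) = Spec(T)$, then the maximal ideal $M$ of $R$ lies in $Spec(T)$. For the converse, the plan is to imitate Proposition~\ref{r6}: reduce modulo $Nil(R)$ and invoke the analogous result for overrings of a local domain with Pr\"ufer integral closure, namely \cite[Corollary~3.21]{rahul}.

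First I would show that any overring $T$ of a ring $R \in \mathcal{H}_0$ is automatically in $\mathcal{H}_0$ with $Nil(T) = Nil(R)$, independently of the hypothesis $M \in Spec(T)$. Since $R \in \mathcal{H}_0$, one has $T(R) = R_{Nil(R)}$ and every element of $R\setminus Nil(R)$ is a non-zerodivisor. The divided property of $Nil(R)$ gives, for each $s \in R\setminus Nil(R)$ and each $n \in Nil(R)$, a factorization $n = sn'$, with $n' \in Nil(R)$ forced by primality. Writing $t = r/s \in T \subseteq R_{Nil(R)}$, this yields $t\cdot Nil(R) \subseteq Nil(R)$, and an analogous computation shows that $Nil(R)$ is a prime ideal of $T$. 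A short chase with representations $t_i = r_i/s_i$ of a zero-divisor equation $t_1 t_2 = 0$ then forces $Z(T) = Nil(T) = Nil(R)$, so $T \in \mathcal{H}_0$.

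Next, pass to the quotient. Let $\overline{R} = R/Nil(R)$ and $\overline{T} = T/Nil(R)$. Then $\overline{R}$ is a local domain with maximal ideal $\overline{M} = M/Nil(R)$, $\overline{T}$ is a domain, $\overline{T}$ is an overring of $\overline{R}$ because $R_{Nil(R)}/Nil(R)R_{Nil(R)} = \mathrm{Quot}(\overline{R})$, and the hypothesis $M \in Spec(T)$ becomes $\overline{M} \in Spec(\overline{T})$. Applying the overring observation to $R \subseteq R'$ gives $Nil(R') = Nil(R)$, and a standard lifting argument (a monic relation over $R'/Nil(R)$ witnessing integrality of an element of $\mathrm{Quot}(\overline{R})$ lifts modulo a nilpotent, which is annihilated by raising the lifted relation to a suitable power) identifies $\overline{R}' = R'/Nil(R)$. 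Since $R'$ is a Pr\"ufer ring in $\mathcal{H}_0$, \cite[Theorem~2.6]{badawi3} yields that $\overline{R}'$ is a Pr\"ufer domain.

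Finally, \cite[Corollary~3.21]{rahul} applied to the overring $\overline{R} \subseteq \overline{T}$ of a local domain whose integral closure is Pr\"ufer gives $Spec(\overline{R}) = Spec(\overline{T})$; lifting this bijection through the common nilradical $Nil(R) = Nil(T)$ produces $Spec(R) = Spec(T)$. The main obstacle I anticipate is the first step: unlike in Proposition~\ref{r6}, $T \in \mathcal{H}$ is not assumed a priori, so establishing $T \in \mathcal{H}_0$ and $Nil(T) = Nil(R)$ requires working directly with the divided-prime structure of $Nil(R)$ together with the $\mathcal{H}_0$-fact that $R \setminus Nil(R)$ consists of non-zerodivisors.
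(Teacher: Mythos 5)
Your proposal is correct and follows essentially the same route as the paper: reduce modulo $Nil(R)$, identify $(R/Nil(R))'$ with $R'/Nil(R)$, note that the latter is a Pr\"ufer domain, and invoke \cite[Corollary~3.21]{rahul}. The only difference is that the paper dispatches the preliminary facts ($T, R' \in \mathcal{H}_0$, $Nil(T)=Nil(R')=Nil(R)$, $T(R/Nil(R))=T(R)/Nil(R)$, and the identification of integral closures) by citing \cite{badawi} and \cite[Lemma~2.8]{badawi4}, whereas you verify them directly from the divided-prime structure.
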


\begin{proof}
It is easy to see that both $R'$ and $T$ are in $\mathcal{H}_0$. Moreover, by $\textup{\cite{badawi}}$, $Nil(T) = Nil(R') = Nil(R)$ and $T(R/Nil(R)) = T(R)/Nil(R)$. It follows that $T/Nil(R)$ is an overring of $R/Nil(R)$. Moreover, by $\textup{\cite[Lemma~2.8]{badawi4}}$, $(R/Nil(R))' = R'/Nil(R)$. Now, by $\textup{\cite[Corollary~2.10]{badawi3}}$, $(R/Nil(R))'$ is a Pr\"ufer domain. Consequently, Spec($R/Nil(R)$) = Spec($T/Nil(R)$) if and only if $M/Nil(R)\in \mbox{Spec}(T/Nil(R))$, by $\textup{\cite[Corollary~3.21]{rahul}}$. Hence, the result follows.
\end{proof} 

\begin{Remark}
In $\textup{\cite{rahul}}$, Anderson and Dobbs raised the following question: given a ring $R$, how does one can obtain all overrings $T$ of $R$ which satisfies Spec($R$) = Spec($T$). They answered the question in $\textup{\cite[Theorem~3.5]{rahul}}$ for domains. However, observe that the proof of $\textup{\cite[Theorem~3.5]{rahul}}$ works for rings also. 
\end{Remark}

We now discuss an example that gives a class of ring extensions having the same set of prime ideals. 

\begin{Example}\label{r11}
Let $B$ be a ring of the form $K + M$, where $K$ is a field and $M$ is a maximal ideal of $B$. Assume that $D$ is a subring of $K$, and
$A = D + M$. Set $U = A(+)T(B)$ and $V = B(+)T(B)$. Then Spec($U$) = Spec($V$) if and only if $V$ is local and $D$ is a field. To see this, note that if Spec($U$) = Spec($V$), then Spec($A$) = Spec($B$), by $\textup{\cite[Theorem~25.1(3)]{huckaba}}$. It follows that $B$ is local and $D$ is a field, by $\textup{\cite[Corollary~3.11]{rahul}}$. Consequently, $V$ is local, by another appeal to $\textup{\cite[Theorem~25.1(3)]{huckaba}}$. Conversely, assume that $V$ is local and $D$ is a field. Then $B$ is local, by $\textup{\cite[Theorem~25.1(3)]{huckaba}}$. It follows that Spec($A$) = Spec($B$), by $\textup{\cite[Corollary~3.11]{rahul}}$. Consequently, by another application of $\textup{\cite[Theorem~25.1(3)]{huckaba}}$, Spec($U$) = Spec($V$).
\end{Example}

The next proposition gives a class of rings $R$ in $\mathcal{H}_0$ which does not admit any ring $T$ properly containing $R$ and having the same set of prime ideals. For domains, it was observed in $\textup{\cite[pg.~372]{rahul}}$.  

\begin{Proposition}\label{r12}
Let $R\in \mathcal{H}_0$ be an integrally closed local ring such that $R\neq T(R)$. If the maximal ideal $M$ of $R$ is finitely generated, then there is no ring $T$ properly containing $R$ such that $Spec(R) = Spec(T)$.
\end{Proposition}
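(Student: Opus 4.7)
The plan is to reduce to the integral-domain case recorded on \cite[pg.~372]{rahul}. Suppose, for contradiction, that some ring $T$ properly contains $R$ and satisfies $Spec(R)=Spec(T)$. By Lemma~\ref{r2}, $T\in \mathcal{H}$, so $Nil(T)=Nil(R)$ is a prime ideal common to both. Set $\overline{R}=R/Nil(R)$, $\overline{T}=T/Nil(R)$, and $\overline{M}=M/Nil(R)$. The bijection $Spec(R)\leftrightarrow Spec(T)$ descends to $Spec(\overline{R})=Spec(\overline{T})$, and the inclusion $\overline{R}\subseteq \overline{T}$ is strict, since $\overline{R}=\overline{T}$ would force $T\subseteq R+Nil(R)=R$.

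Next I would verify that $\overline{R}$ inherits every hypothesis needed for the domain statement. It is a local domain with finitely generated maximal ideal $\overline{M}$. Since $R\in \mathcal{H}_0$, the identification $T(\overline{R})=T(R)/Nil(R)$ recorded in \cite{badawi} converts $R\neq T(R)$ into $\overline{R}\neq T(\overline{R})$; in particular, $\overline{R}$ is not a field. The delicate point is that $\overline{R}$ must be integrally closed in $T(\overline{R})$, which is an instance of the $\mathcal{H}_0$ descent of integral closure (compare the arguments in \cite{badawi4}): any monic integrality relation for an element of $T(\overline{R})$ over $\overline{R}$ is lifted to $R$ using that $Nil(R)$ is a divided prime of $T(R)$, so that its image must already lie in $R/Nil(R)$.

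With these in hand, I would invoke the observation on \cite[pg.~372]{rahul}: an integrally closed local domain that is not a field, with finitely generated maximal ideal, admits no proper extension having the same spectrum. Applied to $\overline{R}\subsetneq \overline{T}$, this forces $\overline{R}=\overline{T}$, hence $T=R$, contradicting $R\subsetneq T$. The main obstacle I anticipate is the descent of integral-closedness from $R$ down to $\overline{R}$; that is the only step where the $\mathcal{H}_0$ hypothesis is genuinely invoked beyond a formal quotient, and once it is secured, the remainder of the argument is a direct transfer from the domain setting.
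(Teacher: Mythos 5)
Your argument is correct and follows essentially the same route as the paper: pass to $R/Nil(R)$ and $T/Nil(R)$, transfer the hypotheses (locality, finitely generated maximal ideal, $R/Nil(R)$ not a field via $T(R/Nil(R))=T(R)/Nil(R)$, and integral closedness via $(R/Nil(R))'=R'/Nil(R)$ from \cite[Lemma~2.8]{badawi4}), and then invoke the domain-case observation on \cite[pg.~372]{rahul}. The ``delicate point'' you flag is exactly the step the paper handles by citing \cite[Lemma~2.8]{badawi4}, so no new idea is needed.
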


\begin{proof}
If possible, suppose there exists a ring $T$ properly containing $R$ such that Spec($R$) = Spec($T$). Then we have $Nil(T) = Nil(R)$ and Spec($R/Nil(R)$) = Spec($T/Nil(R)$). Moreover, by $\textup{\cite{badawi}}$ and $\textup{\cite[Lemma~2.8]{badawi4}}$, we conclude that $T(R/Nil(R)) = T(R)/Nil(R)$ and $(R/Nil(R))' = R'/Nil(R)$, respectively. By hypothesis, it follows that $R/Nil(R)$ is an integrally closed local domain which is not a field. Moreover, the maximal ideal $M/Nil(R)$ of $R/Nil(R)$ is finitely generated. Consequently, we have a contradiction, by last paragraph of $\textup{\cite[pg.~372]{rahul}}$.
\end{proof}

The next example shows that the condition on $M$ of being finitely generated can not be removed. 

\begin{Example}
Let $F\subset K$ be a purely transcendental field extension. Take a valuation domain $B = K + M$ with unique nonzero maximal ideal $M$. Set $R = A(+)T(B)$ and $T = B(+)T(B)$ where $A = F + M$. Then it is easy to see that $Z(R) = Nil(R) = Nil(T) = \{0\}(+)T(B)$. Let $(0, x)\in Nil(R)$ and $(y, z)\in R\setminus Nil(R)$. Then $y\neq 0$ and $(0, x) = (y, z)(0, x/y)$. It follows that $R\in \mathcal{H}_0$. Since $A$ is an integrally closed domain, $R$ is an integrally closed ring, by $\textup{\cite[Theorem~25.6]{huckaba}}$. Also, by $\textup{\cite[Corollary~25.5(3)]{huckaba}}$, $R\neq T(R)$. Since $M$ is not finitely generated, $M(+)T(B)$ is not finitely generated. Note that $M(+)T(B)$ is the unique maximal ideal of $R$, by $\textup{\cite[Theorem~25.1(3)]{huckaba}}$. However, by Example $\ref{r11}$, Spec($R$) = Spec($T$).
\end{Example} 

Let $C(R)$ denotes the complete integral closure of $R$ in $T(R)$. In the next theorem, we show that the ring extensions in $\mathcal{H}_0$ having the same set of prime ideals will have the same complete integral closure. This generalizes $\textup{\cite[Proposition~3.15]{rahul}}$ for rings in $\mathcal{H}_0$. 

\begin{Theorem}\label{r13}
Let $R \in \mathcal{H}_0$ be such that $R\neq T(R)$. If $T$ is a ring containing $R$ such that $Spec(R) = Spec(T)$, then $C(R) = C(T)$. 
\end{Theorem}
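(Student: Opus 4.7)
The plan is to reduce to the domain case of \cite[Proposition~3.15]{rahul} via the quotient by $N := Nil(R) = Nil(T)$, and then lift back using the $\mathcal{H}_0$-structure. Note first that the hypothesis $R \neq T(R)$ combined with $Z(R) = N$ (from $R \in \mathcal{H}_0$) forces $N$ to be non-maximal in $R$: otherwise every non-unit of $R$ would be a zero divisor, making $R$ its own total quotient ring. Thus $\overline{R} := R/N$ and $\overline{T} := T/N$ are integral domains with Spec($\overline{R}$) = Spec($\overline{T}$), and $\overline{R}$ is not a field.

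The key preparatory step is to embed $T$ into $T(R)$ as a subring and thereby show $T \in \mathcal{H}_0$. Since $\overline{R} \subseteq \overline{T}$ are domains and any nonzero prime of $\overline{R}$ is a common nonzero ideal containing non-zerodivisors of $\overline{T}$, Lemma \ref{lem} yields $\overline{T} \subseteq T(\overline{R})$. For $t \in T$, choose $r \in R$ and $s \in R \setminus N$ with $\overline{t} = \overline{r}/\overline{s}$; then $st - r \in N$, giving $st \in R$. Moreover each $s \in R \setminus N$ is a non-zerodivisor of $T$: if $st' = 0$ for $t' \in T$, the domain $\overline{T}$ forces $\overline{t'} = 0$, so $t' \in N \subseteq R$, and then the non-zerodivisor property of $s$ in $R$ yields $t' = 0$. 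Hence $(st)/s$ is a well-defined element of $R_N = T(R)$ independent of the choice of $s$, and the assignment $t \mapsto (st)/s$ defines an injective ring homomorphism $T \hookrightarrow T(R)$ extending $R \hookrightarrow T(R)$; identifying $T$ with its image, we obtain $T \subseteq T(R)$. Since $T(R) = R_N$ is local with unique maximal ideal $NR_N$ whose complement consists of units, every zero divisor of $T$ lies in $NR_N \cap T = N$, so $T \in \mathcal{H}_0$ and $T(T) = T(R)$.

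Applying \cite[Proposition~3.15]{rahul} to $\overline{R} \subseteq \overline{T}$ yields $C(\overline{R}) = C(\overline{T})$ inside $T(\overline{R}) = T(R)/NR_N$. For the final lift, I would argue: for $x \in T(R)$, $x \in C(R)$ iff $\overline{x} \in C(\overline{R})$. Forward: if $c \in R$ is a non-zerodivisor with $cx^n \in R$ for all $n$, then $c \in R \setminus N$, so $\overline{c} \neq 0$ and $\overline{c}\,\overline{x}^n \in \overline{R}$. Conversely, any lift $c \in R \setminus N$ of a witness $\overline{c} \neq 0$ is a non-zerodivisor (since $Z(R) = N$), and $\overline{c}\,\overline{x}^n \in \overline{R}$ translates to $cx^n \in R + N = R$. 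The analogous equivalence for $T$ holds because $T \in \mathcal{H}_0$ and $T(T) = T(R)$, yielding $C(R) = C(T)$.

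The main obstacle is the inclusion $T \subseteq T(R)$, together with the accompanying $T \in \mathcal{H}_0$; once secured via Lemma \ref{lem} applied to the reduced extension and a careful non-zerodivisor analysis, the remaining quotient-and-lift argument is essentially formal.
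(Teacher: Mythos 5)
Your proposal is correct and follows essentially the same route as the paper: both reduce modulo $Nil(R)=Nil(T)$, apply \cite[Proposition~3.15]{rahul} to the induced extension of domains $R/Nil(R)\subseteq T/Nil(R)$, and lift the conclusion back to $C(R)=C(T)$. The only difference is that you verify the lifting steps by hand (the embedding $T\subseteq T(R)$, the fact that $T\in\mathcal{H}_0$, and the compatibility of the complete integral closure with the quotient by the nilradical), where the paper delegates them to Lemma~\ref{r2} and \cite[Lemma~2.8]{badawi4}.
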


\begin{proof}
It is easy to conclude that $Nil(R) = Nil(T)$, $Z(T) = Z(R)$ and Spec($R/Nil(R)$) = Spec($T/Nil(R)$). Also, $T(R/Nil(R)) = T(R)/Nil(R)$, by $\textup{\cite{badawi}}$, and so $R/Nil(R)$ is not a field. By $\textup{\cite[Proposition~3.15]{rahul}}$, it follows that $C(R/Nil(R)) = C(T/Nil(R))$. Now, by Lemma $\ref{r2}$, $T\in \mathcal{H}_0$. Consequently, we have $C(R)/Nil(R) = C(T)/Nil(R)$, by $\textup{\cite[Lemma~2.8]{badawi4}}$. Thus, $C(R) = C(T)$.
\end{proof}

The following corollary is an immediate consequence of Theorem $\ref{r13}$ when $R = C(R)$.  

\begin{Corollary}\label{r14}
Let $R\in \mathcal{H}_0$ be a completely integrally closed ring such that $R\neq T(R)$. Then there is no ring $T$ properly containing $R$ such that $Spec(R) = Spec(T)$. 
\end{Corollary}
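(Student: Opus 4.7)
The plan is to argue by contradiction, leveraging Theorem~\ref{r13} directly. Suppose, for contradiction, that there is a ring $T$ with $R\subset T$ properly and Spec($R$) $=$ Spec($T$). The standing hypotheses $R\in \mathcal{H}_0$ and $R\neq T(R)$ of the corollary are exactly those required by Theorem~\ref{r13}, so that theorem applies and yields $C(R) = C(T)$.

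Since $R$ is completely integrally closed by assumption, $C(R) = R$, and therefore $C(T) = R$ as well. On the other hand, by the very definition of the complete integral closure one always has $T\subseteq C(T)$. Chaining these together, $T\subseteq C(T) = C(R) = R$, which forces $T = R$ and contradicts the strictness of the inclusion $R\subset T$.

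Essentially no step here presents a genuine obstacle; the result is a three-line consequence of Theorem~\ref{r13}. The only bookkeeping point worth noting is that the equality $C(R) = C(T)$ in Theorem~\ref{r13} is to be interpreted as a set-equality inside a common ambient ring: the argument in the proof of that theorem shows $Nil(R) = Nil(T)$ and $Z(R) = Z(T)$, so $T(R) = T(T)$, and both complete integral closures sit inside this common total quotient ring. With that in hand, the chain $T\subseteq C(T) = C(R) = R$ makes literal sense and the argument closes.
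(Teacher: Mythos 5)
Your proposal is correct and matches the paper's argument: the paper derives Corollary~\ref{r14} as an immediate consequence of Theorem~\ref{r13} in the case $R = C(R)$, exactly as you do via the chain $T\subseteq C(T) = C(R) = R$. Your remark about interpreting the equality of complete integral closures inside a common total quotient ring is a reasonable bookkeeping point that the paper leaves implicit.
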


In $\textup{\cite[Lemma~2.4]{rahul}}$, Anderson and Dobbs proved the next result for local domains which are not fields. Here, we generalize the same for rings. In fact, the proof follows mutatis mutandis from the proof of $\textup{\cite[Lemma~2.4]{rahul}}$. Note that an ideal $I$ of a ring $R$ is said to be divisorial if $I = (R : (R: I))$. 

\begin{Theorem}\label{r17}
Let $R$ be a local ring with maximal ideal $M$. If $M$ contains a non-zerodivisor of $R$, then the following hold:
\begin{enumerate}
\item[(i)] $(R : M) = (M : M)$ if and only if $M$ is not a principal ideal of $R$. Moreover, if $M = Rr$ for some $r\in R$, then $(R : M) = Rr^{-1}$ and $(M : M) = R$.
\item[(ii)] If $M$ is not a divisorial ideal of $R$, then $(R : M) = (M : M)$.
\end{enumerate}
\end{Theorem}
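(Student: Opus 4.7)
The plan is to follow the template of the proof of \textup{\cite[Lemma~2.4]{rahul}} by Anderson and Dobbs for local domains, verifying at each step that the only appeal to the domain hypothesis can be replaced by the weaker assumption that $M$ contains a non-zerodivisor of $R$. The recurring device will be the elementary observation that if $x \in T(R)$ and $m \in R$ satisfy $xm = 1$, then $m$ is automatically a non-zerodivisor of $R$, since $tm = 0$ forces $t = t(xm) = x(tm) = 0$. This observation replaces the use of $R$ being a domain wherever inverses enter the Anderson--Dobbs argument.

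For part $(i)$, I would first record the ``moreover'' clause by direct computation. If $M = Rr$, then $r$ is itself a non-zerodivisor: writing some non-zerodivisor $y \in M$ as $y = ar$ forces $r$ to be regular. The definitions then give $(R : M) = Rr^{-1}$, where $r^{-1}$ denotes the inverse of $r$ in $T(R)$, and $(M : M) = R$, since $xr \in Rr$ combined with the regularity of $r$ yields $x \in R$. Because $r$ is a non-unit ($M$ being proper), $Rr^{-1} \neq R$, which proves the ``only if'' direction by contraposition. For the ``if'' direction, assume $M$ is not principal and pick $x \in (R : M)$. Then $xM$ is an ideal of $R$; were $xM \not\subseteq M$ then $xM = R$ by locality, so $xm = 1$ for some $m \in M$. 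The device above makes $m$ a non-zerodivisor, and for each $n \in M$ we get $n = (xn)m \in Rm$, forcing $M = Rm$ and contradicting the hypothesis. Hence $xM \subseteq M$, so $(R : M) \subseteq (M : M)$; the reverse inclusion is trivial.

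Part $(ii)$ reduces to $(i)$. Suppose for contradiction that $M$ is principal, $M = Rr$. By the ``moreover'' clause, $(R : M) = Rr^{-1}$, and an analogous direct computation gives $(R : (R : M)) = (R : Rr^{-1}) = \{x \in T(R) : xr^{-1} \in R\} = Rr = M$, so $M$ is divisorial, contradicting the hypothesis. Therefore $M$ is non-principal, and $(i)$ yields $(R : M) = (M : M)$. The main obstacle I anticipate is the legitimacy of working with inverses in $T(R)$ when $R$ admits zero-divisors, but this is uniformly controlled by the device recorded in the first paragraph: every element by which one needs to divide turns out to be a non-zerodivisor.
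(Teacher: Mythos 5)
Your proof is correct and follows essentially the same route as the paper, which gives no details and simply asserts that the argument of Anderson--Dobbs \textup{\cite[Lemma~2.4]{rahul}} carries over mutatis mutandis; your writeup is precisely that adaptation, with the key observation (that any $m\in M$ with $xm=1$ for some $x\in T(R)$, and any generator of a principal $M$, is automatically a non-zerodivisor) correctly supplying the only places where the domain hypothesis was used.
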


The next proposition generalizes $\textup{\cite[Proposition~3.23]{rahul}}$. 

\begin{Proposition}\label{r18}
Let $R$ be a local ring with maximal ideal $M$ such that $M$ contains a non-zerodivisor of $R$. Then $(M : M)\neq R$ if and only if $M$ is a non-principal divisorial ideal of $R$.
\end{Proposition}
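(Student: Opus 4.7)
The plan is to reduce everything to Theorem \ref{r17}(i), which relates the colon ideals $(R:M)$ and $(M:M)$ to whether $M$ is principal, and then to exploit maximality of $M$ to pin down $(R:(R:M))$.

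For the forward implication, suppose $(M:M)\neq R$. The ``moreover'' clause of Theorem \ref{r17}(i) shows that if $M=Rr$ were principal then $(M:M)=R$, contradicting our hypothesis; hence $M$ is non-principal. The first part of Theorem \ref{r17}(i) then yields $(R:M)=(M:M)$. Write $T=(M:M)$, so that $T$ is a ring with $R\subsetneq T\subseteq T(R)$. Since $MT\subseteq M\subseteq R$, we have $M\subseteq (R:T)$. On the other hand, $(R:T)\subseteq R$ (evaluate at $1\in T$), and $(R:T)$ is a proper ideal of $R$ because $T\not\subseteq R$. Maximality of $M$ forces $(R:T)=M$, i.e., $(R:(R:M))=M$, so $M$ is divisorial and (as shown above) non-principal.

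For the converse, assume $M$ is a non-principal divisorial ideal. By Theorem \ref{r17}(i) again, $(R:M)=(M:M)$. If we had $(M:M)=R$, then $(R:M)=R$, whence $(R:(R:M))=(R:R)=R$; but divisoriality gives $(R:(R:M))=M\neq R$, a contradiction. Hence $(M:M)\neq R$.

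The main obstacle, such as it is, is the verification that $(R:T)$ is a proper ideal of $R$ whenever $T\supsetneq R$ is a ring between $R$ and $T(R)$; this requires only the definition of the colon and the fact that $1\in T$, but it is the one place where the argument uses the structure of $T=(M:M)$ as a ring containing $R$ rather than just an $R$-module. Everything else is a direct invocation of Theorem \ref{r17}(i) together with the observation that a proper $R$-ideal containing the maximal ideal $M$ must equal $M$.
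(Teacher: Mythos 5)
Your proof is correct and follows essentially the same route as the paper: the paper's ``only if'' direction is delegated to the proof of Anderson--Dobbs, which is precisely your maximality argument $M\subseteq (R:T)\subsetneq R$ for the ring $T=(M:M)=(R:M)$, and the ``if'' direction is the same appeal to Theorem~\ref{r17}(i). You have simply written out in full the argument that the paper cites.
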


\begin{proof}
The ``only if'' assertion follows mutatis mutandis from the proof of $\textup{\cite[Proposition~3.23]{rahul}}$ and ``if'' assertion follows from Theorem $\ref{r17}$.  
\end{proof}

The proof of the next corollary follows directly from Proposition $\ref{r18}$ and Lemma $\ref{lem}$. 

\begin{Corollary}\label{r19}
Let $R$ be a local ring such that the maximal ideal of $R$ contains a non-zerodivisor of $R$. Assume that the maximal ideal of $R$ is either principal or a non-divisorial ideal of $R$. Then there is no ring $T$ properly containing $R$ such that $Spec(R) = Spec(T)$.
\end{Corollary}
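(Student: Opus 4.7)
The plan is to argue by contradiction, using Proposition \ref{r18} to force $(M:M)=R$, and then using Lemma \ref{lem} to trap any candidate overring inside $(M:M)$.

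Let $M$ denote the maximal ideal of $R$. By Proposition \ref{r18}, the condition $(M:M)\neq R$ is equivalent to $M$ being both non-principal and divisorial. Taking the contrapositive, the hypothesis that $M$ is principal or non-divisorial yields $(M:M)=R$. This is the key algebraic input.

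Suppose, toward a contradiction, that there exists a ring $T$ properly containing $R$ with $\mathrm{Spec}(R)=\mathrm{Spec}(T)$. Since $R$ is local with maximal ideal $M$ and $\mathrm{Spec}(R)=\mathrm{Spec}(T)$, $M$ is also the unique maximal ideal of $T$, so $M$ is a common ideal of $R$ and $T$, and is nonzero because it contains a non-zerodivisor of $R$. To invoke Lemma \ref{lem}, I need to verify that $M$ contains a non-zerodivisor of $T$ (not merely of $R$). This is exactly the argument already given in the paragraph following Lemma \ref{lem}: if every element of $M$ were a zero-divisor of $T$, then $M=Z(T)$, and since $\mathrm{Spec}(R)=\mathrm{Spec}(T)$ one concludes $M=Z(R)$, contradicting the existence of a non-zerodivisor of $R$ in $M$.

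Having secured a non-zerodivisor of $T$ inside the common ideal $M$, Lemma \ref{lem} gives $T\subseteq (M:M)$. Combined with $(M:M)=R$ from the first step, this forces $T\subseteq R$, contradicting the proper inclusion $R\subsetneq T$. Hence no such $T$ exists.

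The only step that is not purely formal is the verification that $M$ contains a non-zerodivisor of $T$, and even this is essentially a quotation of the argument the authors used earlier in the paper; so I expect the proof to be a short three-line chain of citations rather than a calculation.
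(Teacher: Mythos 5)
Your proof is correct and is precisely the argument the paper intends: the paper's proof of Corollary \ref{r19} is the single sentence that it ``follows directly from Proposition \ref{r18} and Lemma \ref{lem},'' and you have filled in exactly those steps, including the necessary verification (borrowed from the paper's discussion after Lemma \ref{lem}) that $M$ contains a non-zerodivisor of $T$ and not merely of $R$. No differences to report.
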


It is easy to see that $\textup{\cite[Corollary~3.30]{rahul}}$ is not true for fields as if $R$ is a field, then $R'$ is not of rank one. However, if we exempt $R$ to be a field, then $R'$ is a discrete (rank 1) valuation domain. In Proposition $\ref{r23}$, we generalize the same to rings in $\mathcal{H}_0$. First, we recall some definitions. An ideal of a ring $R$ is said to be a nonnil ideal if $I\not \subseteq Nil(R)$. A ring $R\in \mathcal{H}$ is called a nonnil-Noetherian ring if every nonnil ideal of $R$ is finitely generated, see $\textup{\cite{badawi10}}$. Recall from $\textup{\cite{badawi4}}$ that a ring $R\in \mathcal{H}$ is said to be a discrete $\phi$-chained ring if $R$ is a $\phi$-chained ring with at most one nonnil prime ideal and every nonnil ideal of $R$ is a principal ideal. 

\begin{Proposition}\label{r23}
Let $R\in \mathcal{H}_0$. If $R$ is a nonnil-Noetherian $\phi$-PVR, then dim($R)\leq 1$ and $R'$ is a discrete $\phi$-chained ring. 
\end{Proposition}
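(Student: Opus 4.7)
The plan is to descend to the domain $\overline{R} := R/Nil(R)$, invoke the Anderson--Dobbs result for Noetherian PVDs, and lift the conclusions back via the $\mathcal{H}_0$ dictionary developed in $\textup{\cite{badawi}}$ and $\textup{\cite{badawi4}}$. Since $R \in \mathcal{H}_0$, $Nil(R)$ is a divided prime ideal, so $\overline{R}$ is an integral domain and $\dim(R) = \dim(\overline{R})$.

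First I would translate the hypotheses to statements about $\overline{R}$. By $\textup{\cite[Proposition~2.9]{badawi9}}$, the assumption that $R$ is a $\phi$-PVR forces $\overline{R}$ to be a PVD. The nonnil-Noetherian hypothesis gives that $\overline{R}$ is Noetherian, since every ideal of $\overline{R}$ lifts to a nonnil ideal of $R$, which is finitely generated by assumption, and its generators descend. Moreover, by $\textup{\cite[Lemma~2.8]{badawi4}}$, $(\overline{R})' = R'/Nil(R)$.

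Next I would split on whether $\overline{R}$ is a field. If $\overline{R}$ is a field, then $\dim(R) = 0$; since a field is integrally closed, $R' = R$, the only nonnil ideal of $R$ is $R$ itself, there is no nonnil prime ideal, and the $\phi$-chained condition is vacuous because $R_{Nil(R)} = \phi(R)$, so $R'$ is trivially a discrete $\phi$-chained ring. If $\overline{R}$ is not a field, then $\textup{\cite[Corollary~3.30]{rahul}}$ gives $\dim(\overline{R}) \leq 1$ and $(\overline{R})' = R'/Nil(R)$ is a DVR, hence $\dim(R) \leq 1$ as well.

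The main obstacle is then promoting the statement ``$R'/Nil(R)$ is a DVR'' to ``$R'$ is a discrete $\phi$-chained ring''. One first checks that $R' \in \mathcal{H}_0$ with $Nil(R') = Nil(R)$, after which $\phi$-chainedness of $R'$ follows from $R'/Nil(R')$ being a valuation domain via $\textup{\cite[Theorem~2.7]{badawi3}}$. The standard correspondence between nonnil primes of $R'$ and nonzero primes of $R'/Nil(R')$ shows $R'$ has at most one nonnil prime. Finally, for the principal-generation condition, any nonnil ideal $J$ of $R'$ yields $J/Nil(R')$ principal in the DVR $R'/Nil(R')$; lifting a generator and using that $Nil(R')$ is divided in $R'$ forces $J$ itself to be principal, completing the verification that $R'$ is a discrete $\phi$-chained ring.
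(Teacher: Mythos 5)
Your proof is correct and follows the same overall route as the paper: pass to $R/Nil(R)$, observe it is a Noetherian PVD, apply $\textup{\cite[Corollary~3.30]{rahul}}$, and lift back through $(R/Nil(R))' = R'/Nil(R)$. The differences are in how the two ends of that reduction are handled. Where you argue directly that the nonnil-Noetherian hypothesis descends to Noetherianness of $R/Nil(R)$, the paper cites $\textup{\cite[Theorem~2.2]{badawi10}}$; and where you verify by hand that a ring in $\mathcal{H}_0$ whose quotient by the nilradical is a DVR is a discrete $\phi$-chained ring (via $\textup{\cite[Theorem~2.7]{badawi3}}$, the prime correspondence, and lifting a principal generator using dividedness of $Nil(R')$), the paper simply invokes $\textup{\cite[Lemma~2.9]{badawi4}}$, which packages exactly that equivalence. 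One point genuinely in your favor: you split off the case where $R/Nil(R)$ is a field. The authors themselves note just before the proposition that $\textup{\cite[Corollary~3.30]{rahul}}$ fails for fields, yet their proof applies it without excluding that case; your observation that the field case gives $\dim(R)=0$, $R'=R$, and a vacuously discrete $\phi$-chained ring closes that small gap cleanly.
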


\begin{proof}
Note that $(R/Nil(R))' = R'/Nil(R)$, by $\textup{\cite[Lemma~2.8]{badawi4}}$. Also, by $\textup{\cite[Theorem~2.2]{badawi10}}$ and $\textup{\cite[Proposition~2.9]{badawi9}}$, $R/Nil(R)$ is a Noetherian PVD. It follows that $dim(R) \leq 1$ and $R'/Nil(R)$ is a discrete valuation domain, by $\textup{\cite[Corollary~3.30]{rahul}}$. Thus, the result holds, by $\textup{\cite[Lemma~2.9]{badawi4}}$.
\end{proof}

Recall from $\textup{\cite{m}}$ that a ring $R$ is said to be a weakly finite-conductor ring if $Ra\cap Rb$ is a finitely generated ideal for all $a, b\in R$.  We end this paper with the generalization of $\textup{\cite[Corollary~A.3]{rahul}}$ to rings in $\mathcal{H}_0$. Note that $\textup{\cite[Corollary~A.3]{rahul}}$ fails to hold when $R\subset T$ is a field extension such that $T$ is not a finitely generated $R$-module. Therefore, the exemption of $R$ to be a field is required in the statement of $\textup{\cite[Corollary~A.3]{rahul}}$. 

\begin{Proposition}\label{r24}
Let $R\subset T$ be a ring extension such that $R\in \mathcal{H}_0$ is a weakly finite-conductor ring, $R\neq T(R)$, and $Spec(R) = Spec(T)$. Let $M$ be the maximal ideal of $R$. Then both $M$ and $T$ are finitely generated $R$-modules.
\end{Proposition}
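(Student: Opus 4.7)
The plan is to mimic the reduction strategy used throughout this paper: pass to the quotient $\bar{R}:=R/Nil(R)$, apply the domain version of the result (Anderson--Dobbs' $\textup{\cite[Corollary~A.3]{rahul}}$ in the non-field case, as emphasized in the paragraph preceding the proposition) to $\bar{R}\subset T/Nil(R)$, and then lift the two finite-generation statements back to $R$.

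The preliminary bookkeeping is routine. Since $\mbox{Spec}(R)=\mbox{Spec}(T)$ and $R\in\mathcal{H}_0$, we have $Nil(R)=Nil(T)$, $Z(R)=Z(T)$, and $\mbox{Spec}(\bar{R})=\mbox{Spec}(T/Nil(R))$, with $\bar{R}\subset T/Nil(R)$ a proper extension (strict because $Nil(R)\subseteq R\subsetneq T$). By $\textup{\cite{badawi}}$, $T(\bar{R})=T(R)/Nil(R)$; together with $R\ne T(R)$ this forces $\bar{R}$ to be a local integral domain with maximal ideal $\bar{M}:=M/Nil(R)$ that is strictly smaller than its quotient field, i.e., not a field.

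The key check is that $\bar{R}$ inherits the weakly finite-conductor property. Fix $\bar{a},\bar{b}\in\bar{R}$. If either representative lies in $Nil(R)$, the corresponding principal ideal in $\bar{R}$ is zero and the intersection is trivially finitely generated; otherwise $a,b\notin Nil(R)=Z(R)$, and the divided-ness of $Nil(R)$ yields $Nil(R)\subseteq Ra$ and $Nil(R)\subseteq Rb$, so $Ra+Nil(R)=Ra$ and $Rb+Nil(R)=Rb$. Hence $\bar{R}\bar{a}\cap\bar{R}\bar{b}=(Ra\cap Rb)/Nil(R)$, which is finitely generated because $R$ is weakly finite-conductor. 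This is the one step that genuinely uses the $\mathcal{H}$-hypothesis and is where I expect the main obstacle to lie.

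With $\bar{R}\subset T/Nil(R)$ now fitting the hypotheses of $\textup{\cite[Corollary~A.3]{rahul}}$, we conclude that $\bar{M}$ and $T/Nil(R)$ are finitely generated $\bar{R}$-modules. Say $\{t_i+Nil(R)\}_{i=1}^n$ generate $T/Nil(R)$ and $\{m_j+Nil(R)\}_{j=1}^k$ generate $\bar{M}$. Then $T=\sum_i Rt_i+Nil(R)\subseteq\sum_i Rt_i+R$, so $\{1,t_1,\ldots,t_n\}$ generate $T$ as an $R$-module. For $M$, pick a non-zerodivisor $a\in M$, which exists because $R$ is local and $R\ne T(R)$; since $Nil(R)$ is divided and $a\notin Nil(R)$, we have $Nil(R)\subseteq Ra$, so $M=\sum_j Rm_j+Nil(R)\subseteq\sum_j Rm_j+Ra$, giving the finite generating set $\{a,m_1,\ldots,m_k\}$ for $M$.
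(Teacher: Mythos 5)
Your proof is correct and follows essentially the same route as the paper: reduce modulo $Nil(R)$, verify that $R/Nil(R)$ is a non-field finite-conductor domain, apply Anderson--Dobbs' Corollary A.3, and lift finite generation back to $R$. The only difference is that you spell out the two steps the paper delegates to ``it is easy to see'' and to the proof of \cite[Lemma~2.4]{badawi3} (namely the divided-ness argument giving $Nil(R)\subseteq Ra$ and the explicit generating sets), and these details are right.
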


\begin{proof}
Note that $R/Nil(R)$ is not a field as $T(R/Nil(R)) = T(R)/Nil(R)$, by $\textup{\cite{badawi}}$. Also, it is easy to see that $R/Nil(R)$ is a finite-conductor domain with maximal ideal $M/Nil(R)$. Thus, $M/Nil(R)$ and $T/Nil(R)$ are finitely generated $R/Nil(R)$-module, by $\textup{\cite[Corollary~A.3]{rahul}}$. Since $R\neq T(R)$, $M$ is a nonnil ideal of $R$. Now, the proof follows mutatis mutandis from the proof of $\textup{\cite[Lemma~2.4]{badawi3}}$.
\end{proof}

\end{document}